\def\sqr#1#2{{\vcenter{\hrule height.#2pt
        \hbox{\vrule width.#2pt height#1pt \kern#1pt
                \vrule width.#2pt}
        \hrule height.#2pt}}}
\numberwithin{equation}{section}
\newtheorem{theorem}{theorem}[section]
\newtheorem{lemma}[theorem]{Lemma}
\newtheorem{open Problem}[theorem]{Open Problem}
\newtheorem{remark}[theorem]{Remark}
\newcommand{\be}{\begin{equation*}}
\newcommand{\ee}{\end{equation*}}
\newcommand{\bee}{\begin{equation}}
\newcommand{\eee}{\end{equation}}
\definecolor{lighterorange}{cmyk}{0,0.42,0.66,0.0}
\title[On partitions of $\mathbb{Z}_{m}$ with the same representation function]{On partitions of $\mathbb{Z}_{m}$ with the same representation function}
\author{Cui-Fang SUN and Meng-Chi XIONG}
\begin{document}

\date{2020-7-1\\E-mail:  cuifangsun@163.com,\;\;  mengchixiong@126.com}

\maketitle

\begin{abstract}
For any positive integer $m$, let $\mathbb{Z}_{m}$ be the set of residue classes modulo $m$. For $A\subseteq \mathbb{Z}_{m}$ and $\overline{n}\in \mathbb{Z}_{m}$, let $R_{A}(\overline{n})$ denote the number of solutions of $\overline{n}=\overline{a}+\overline{a'}$ with unordered pairs $(\overline{a}, \overline{a'})\in A \times A$. In this paper, we prove that if $m=2^{\alpha}$ with $\alpha\neq 2$, $A\cup B=\mathbb{Z}_{m}$ and $|A\cap B|=2$, then $R_{A}(\overline{n})=R_{A}(\overline{n})$ for all $\overline{n}\in \mathbb{Z}_{m}$ if and only if $B=A+\overline{\frac{m}{2}}$.

\noindent{{\bf Keywords:}\hspace{2mm} Representation function, partition, residue class.}
\end{abstract}

\maketitle

\section{Introduction}
Let $\mathbb{N}$ be the set of all nonnegative integers. For $S\subseteq \mathbb{N}$ and $n\in S$, let the representation function $R'_{S}(n)$ denote the number of solutions of the equation $s+s'=n$ with $s\leq s'$ and $s, s'\in S$. S\'{a}rk\H{o}zy asked whether there exist two subsets $A, B$ of $\mathbb{N}$ with $|(A\cup B)\backslash (A\cap B)|=\infty$ such that $R'_{A}(n)=R'_{B}(n)$ for all sufficiently large integers $n$. In 2003, Chen and Wang \cite{CB} showed that the set of positive integers can be partitioned into two subsets $A$ and $B$ such that $R'_A(n)=R'_B(n)$ for all $n\geq 3$. There are many other related results (see \cite{D, KS, LT, Q3, TC, TL} and the references therein).

For a positive integer $m$, let $\mathbb{Z}_{m}$ be the set of residue classes modulo $m$. For any residue classes $\overline{a}, \overline{b}\in \mathbb{Z}_{m}$, there exist two integers $a', b'$ with $0\leq a', b'\leq m-1$ such that $\overline{a'}=\overline{a}$ and $\overline{b'}=\overline{b}$. We define the ordering $\overline{a}\leq \overline{b}$ if $a'\leq b'$. For $A\subseteq \mathbb{Z}_{m}$ and $\overline{n}\in \mathbb{Z}_{m}$, let $R_{A}(\overline{n})$ denote the number of solutions of $\overline{n}=\overline{a}+\overline{a'}$ with $\overline{a}\leq \overline{a'}$ and $\overline{a}, \overline{a'}\in A$. For $\overline{n}\in \mathbb{Z}_{m}$ and $A\subseteq \mathbb{Z}_{m}$, let $\overline{n}+A=\{\overline{n}+\overline{a}: \overline{a}\in A\}$. For $A, B\subseteq \mathbb{Z}_{m}$ and $\overline{n}\in \mathbb{Z}_{m}$, let $R_{A, B}(\overline{n})$ be the number of solutions of $\overline{n}=\overline{a}+\overline{b}$ with $\overline{a}\in A$ and $\overline{b}\in B$.
The characteristic function of $A\subseteq \mathbb{Z}_{m}$ is denoted by
$$\chi_{A}(n)=\begin{cases}1  &\overline{n}\in A,\\
0 & \overline{n}\not\in A.
\end{cases}$$

In 2012, Yang and Chen \cite{YC} determined all sets $A, B\subseteq \mathbb{Z}_{m}$ with $|(A\cup B)\backslash (A\cap B)|=m$ such that $R_{A}(\overline{n})=R_{B}(\overline{n})$ for all $\overline{n}\in \mathbb{Z}_{m}$. In 2014, Qu \cite{Q1, Q2} studied more general forms of these results. In 2014, Kiss et al. \cite{K} generalized some results to the finite Abelian group. In 2017, Yang and Tang \cite{YT} determined all sets $A, B\subseteq \mathbb{Z}_{m}$ with $|(A\cup B)\backslash (A\cap B)|=2$ or $m-1$ such that  $R_{A}(\overline{n})=R_{B}(\overline{n})$ for all $\overline{n}\in \mathbb{Z}_{m}$.

 In this paper, we consider the partitions of $\mathbb{Z}_{m}$ with $A\cup B=\mathbb{Z}_{m}$ and $|A\cap B|=2$ and obtain the following result:

\begin{theorem}\label{thm1}
Let $\alpha\neq 2$ be an integer and $m=2^{\alpha}$. Let $A, B\subseteq \mathbb{Z}_{m}$ with $A\cup B=\mathbb{Z}_{m}$ and $|A\cap B|=2$. Then $R_{A}(\overline{n})=R_{B}(\overline{n})$ for all $\overline{n}\in \mathbb{Z}_{m}$ if and only if $B=A+\overline{\frac{m}{2}}$.
\end{theorem}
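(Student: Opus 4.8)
The backward direction is routine: if $B=A+\overline{m/2}$, then since doubling $\overline{m/2}$ gives $\overline{0}$ and the map $\overline{a}\mapsto\overline{a}+\overline{m/2}$ is a bijection of $\mathbb{Z}_m$, one checks directly that $R_B(\overline{n})=R_A(\overline{n}-\overline{m})=R_A(\overline{n})$ for every $\overline{n}$; the hypothesis $|A\cap B|=2$ together with $A\cup B=\mathbb{Z}_m$ just forces $|A|=|B|=\tfrac{m}{2}+1$, which is consistent. So the real content is the forward direction.

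For the forward direction my plan is to pass to generating functions. Writing $A(x)=\sum_{\overline{a}\in A}x^{a}$ and $B(x)$ likewise in the ring $\mathbb{Z}[x]/(x^m-1)$, the identity $R_A(\overline{n})=R_B(\overline{n})$ for all $\overline{n}$ translates (after accounting for the unordered-pair convention and the diagonal terms $2\overline{a}$) into
\[
A(x)^2-A(x^2)=B(x)^2-B(x^2)\pmod{x^m-1}.
\]
Since $A\cup B=\mathbb{Z}_m$ and $|A\cap B|=2$, we have $A(x)+B(x)=\frac{x^m-1}{x-1}+C(x)$ where $C(x)=x^{c_1}+x^{c_2}$ records the two common elements $\overline{c_1},\overline{c_2}$ of $A\cap B$. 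Subtracting the two squared relations and factoring gives
\[
\bigl(A(x)-B(x)\bigr)\bigl(A(x)+B(x)\bigr)\equiv A(x^2)-B(x^2)\pmod{x^m-1}.
\]
Set $D(x)=A(x)-B(x)$; then $D$ has coefficients in $\{-1,0,1\}$, the support of $D$ is $(A\cup B)\setminus(A\cap B)=\mathbb{Z}_m\setminus\{\overline{c_1},\overline{c_2}\}$, and $D(1)=|A|-|B|=0$. The key relation becomes
\[
D(x)\Bigl(\tfrac{x^m-1}{x-1}+C(x)\Bigr)\equiv D(x^2)\pmod{x^m-1}.
\]
Evaluating at $x=1$ is consistent (both sides $0$), so I would instead study this modulo the cyclotomic factors of $x^m-1$. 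Because $m=2^\alpha$, the cyclotomic polynomials dividing $x^m-1$ are $\Phi_{2^k}(x)=x^{2^{k-1}}+1$ for $0\le k\le\alpha$ (and $x-1$). Reducing $\bmod\ \Phi_{2^k}$, the term $\frac{x^m-1}{x-1}$ vanishes for $k\ge1$, so we get $D(x)C(x)\equiv D(x^2)\pmod{\Phi_{2^k}(x)}$ for each $k\ge1$; and since $x^2$ modulo $\Phi_{2^k}$ is a primitive $2^{k-1}$-th root, this links the behaviour of $D$ at level $k$ to its behaviour at level $k-1$. Carefully unwinding this recursion — the main obstacle — should force $D(x)$, hence $A$ and $B$, into the claimed form, the point being that $C(x)=x^{c_1}+x^{c_2}$ with exactly two terms is very rigid: its value at a primitive $2^k$-th root of unity $\zeta$ is $\zeta^{c_1}(1+\zeta^{c_2-c_1})$, which is nonzero unless $c_2-c_1\equiv 2^{k-1}\pmod{2^k}$.

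Concretely, I expect the argument to proceed by: (i) showing $c_2-c_1\equiv\overline{m/2}$, so $C(x)=x^{c_1}(1+x^{m/2})$, by testing the relation at a primitive $m$-th root of unity where $D$ cannot vanish identically (here one uses that $|{\rm supp}(D)|=m-2$ is large, so $D$ is not divisible by $\Phi_m$; this is where $\alpha\ne2$, i.e. $m\ne4$, enters — for $m=4$ the support has size $2$ and the rigidity fails); (ii) deducing from $D(x)(1+x^{m/2})\equiv\text{(known)}$ that $D(x)$ itself is divisible by $1+x^{m/2}$ modulo appropriate factors, which combined with the coefficient constraint $D\in\{-1,0,1\}$ and $\mathrm{supp}(D)=\mathbb{Z}_m\setminus\{\overline{c_1},\overline{c_1}+\overline{m/2}\}$ pins down $D$ up to sign to be $\sum_{j}(x^{j}-x^{j+m/2})$ over a transversal; (iii) translating $D=A(x)-B(x)$ with this shape back into $B=A+\overline{m/2}$. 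The delicate point throughout is handling the interaction of the unordered-pair/diagonal correction $A(x^2)$ with the cyclotomic reductions and ruling out sporadic solutions when $m$ is small, which is exactly why the hypothesis excludes $\alpha=2$; I would treat $m=1,2,8$ (the small cases with $\alpha\ne2$) by direct inspection if the general recursion needs a base.
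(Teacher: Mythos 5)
Your generating-function setup is a genuinely different route from the paper's (which works coefficientwise with characteristic functions and an explicit recursion), and up to a sign it is sound: with the unordered-pair convention one gets $A(x)^2+A(x^2)\equiv B(x)^2+B(x^2)$, hence $D(x)\bigl(\tfrac{x^m-1}{x-1}+C(x)\bigr)\equiv -D(x^2)\pmod{x^m-1}$, not $+D(x^2)$. The real problem is a gap at your central step (i). Reducing modulo $\Phi_m$ gives $D(\zeta)C(\zeta)=-D(\zeta^2)$ at a primitive $m$-th root of unity $\zeta$; from $D(\zeta)\ne 0$ you can only conclude $C(\zeta)=-D(\zeta^2)/D(\zeta)$, which forces $C(\zeta)=0$ (equivalently $c_2-c_1\equiv \frac{m}{2}$) only if you also know $D(\zeta^2)=0$, i.e.\ $\Phi_{m/2}\mid D$. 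That divisibility is nowhere established, and it is exactly where the difficulty sits, since your relation only ties level $k$ to level $k-1$ and $C$ is unknown at every level. The paper's own $m=4$ example ($A=\{\overline{0},\overline{1},\overline{2}\}$, $B=\{\overline{0},\overline{1},\overline{3}\}$) shows the inference pattern is invalid as stated: there $D=x^2-x^3$ is \emph{not} divisible by $\Phi_4=x^2+1$, yet $C(i)=1+i\ne 0$ and the relation $D(i)C(i)=-D(-1)$ holds with both factors nonzero. So non-divisibility of $D$ by $\Phi_m$ does not buy you $C(\zeta)=0$, and your proposed explanation of where $\alpha\ne 2$ enters is not the actual mechanism.

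The remaining steps are likewise only sketched, and you explicitly defer the ``main obstacle'' of unwinding the recursion across cyclotomic levels; that unwinding is the substance of the theorem, not a detail. For comparison, the paper first derives the coefficient-level identity (its Lemma~\ref{lem2}) $\chi_A(n-r_1)+\chi_A(n-r_2)=1+R_{\{\overline{r_1},\overline{r_2}\}}(\overline{n})$ for odd $n$, with the correction $2+R_{\{\overline{r_1},\overline{r_2}\}}(\overline{n})-\chi_A(\frac n2)-\chi_A(\frac{n+m}{2})$ for even $n$; it then forces $r_2-r_1\equiv\frac m2$ by a two-case parity analysis ending in explicit contradictions, and handles the analogue of your steps (ii)--(iii) (its Lemma~\ref{lem3}) by iterating the substitution along the sequence $a_{i+1}=2a_i-r$ until the parity drops into the odd case. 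Your outline would need an argument of comparable length and specificity before it constitutes a proof.
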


\begin{remark}
Let $m=2^{2}$ and $\mathbb{Z}_{m}=\{\overline{0}, \overline{1}, \overline{2}, \overline{3}\}$. Let $A=\{\overline{0}, \overline{1}, \overline{2}\}$ and $B=\{\overline{0}, \overline{1}, \overline{3}\}$. Then $B\neq A+\overline{\frac{m}{2}}$ and $R_{A}(\overline{n})=R_{B}(\overline{n})$ for all $\overline{n}\in \mathbb{Z}_{m}$.
\end{remark}

\section{Lemmas}

\begin{lemma}\label{lem1}
Let $m$ be a positive even integer. Let $A, B\subseteq \mathbb{Z}_{m}$ with $A\cup B=\mathbb{Z}_{m}$ and $|A\cap B|=2$. If $R_{A}(\overline{n})=R_{B}(\overline{n})$ for all $\overline{n}\in \mathbb{Z}_{m}$, then $|A|=|B|=\frac{m}{2}+1$.
\end{lemma}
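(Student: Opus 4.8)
The plan is to exploit the generating-function identity that encodes the representation function. For $A \subseteq \mathbb{Z}_m$ write $f_A(x) = \sum_{\overline{a}\in A} x^a \in \mathbb{Z}[x]/(x^m-1)$, where for each residue class we pick the representative in $\{0,1,\dots,m-1\}$. The key observation is that $f_A(x)^2 = \sum_{\overline{n}} \bigl(2R_A(\overline{n}) - \chi_{D_A}(n)\bigr) x^n$ in $\mathbb{Z}[x]/(x^m-1)$, where $D_A = \{\overline{n} : \overline{n} = \overline{2a} \text{ for some } \overline{a}\in A\}$ accounts for the diagonal terms $\overline{a}+\overline{a}$; more precisely $f_A(x)^2 = 2\sum_{\overline n} R_A(\overline n) x^n - \sum_{\overline a \in A} x^{2a}$. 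Thus the hypothesis $R_A(\overline{n}) = R_B(\overline{n})$ for all $\overline{n}$ gives
\[
f_A(x)^2 - f_B(x)^2 \;=\; \sum_{\overline b\in B} x^{2b} - \sum_{\overline a\in A} x^{2a} \qquad \text{in } \mathbb{Z}[x]/(x^m-1).
\]

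First I would evaluate both sides at $x = 1$. The left side becomes $f_A(1)^2 - f_B(1)^2 = |A|^2 - |B|^2$. The right side becomes $|B| - |A|$ (each sum has exactly $|B|$, resp. $|A|$, terms, counted with multiplicity, and substituting $x=1$ just counts them). Hence $|A|^2 - |B|^2 = |B| - |A|$, i.e. $(|A|-|B|)(|A|+|B|) = -(|A|-|B|)$, so either $|A| = |B|$ or $|A| + |B| = -1$; the latter is impossible for cardinalities, so $|A| = |B|$. Next, from $A \cup B = \mathbb{Z}_m$ and $|A \cap B| = 2$, inclusion–exclusion gives $|A| + |B| = |A \cup B| + |A \cap B| = m + 2$. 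Combining $|A| = |B|$ with $|A| + |B| = m+2$ yields $|A| = |B| = \frac{m}{2} + 1$.

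The argument above is essentially self-contained and the only mild subtlety is bookkeeping the diagonal correction term correctly; evaluating at $x=1$ makes that correction contribute the clean quantity $|B|-|A|$, and since $m$ is even, $\frac{m}{2}$ is an integer so the conclusion makes sense. I expect no real obstacle here: the "hard part" is merely to set up the polynomial identity with the right factor of $2$ and diagonal term, after which the cardinality count is immediate. Alternatively, one can avoid generating functions entirely: summing $R_A(\overline n)$ over all $\overline n \in \mathbb{Z}_m$ counts each ordered pair $(\overline a, \overline a')$ with $\overline a \neq \overline a'$ once and each pair $\overline a = \overline a'$ once, giving $\sum_{\overline n} R_A(\overline n) = \binom{|A|}{2} + |A| = \binom{|A|+1}{2}$; likewise for $B$; so $R_A = R_B$ termwise forces $\binom{|A|+1}{2} = \binom{|B|+1}{2}$, hence $|A| = |B|$, and then inclusion–exclusion finishes as before. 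I would present whichever of these two is shorter in the final writeup.
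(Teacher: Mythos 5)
Your proposal is correct, and your ``alternative'' elementary argument at the end --- summing $R_A(\overline{n})$ over all $\overline{n}\in\mathbb{Z}_m$ to get $\binom{|A|}{2}+|A|$, concluding $|A|=|B|$, then applying inclusion--exclusion --- is exactly the paper's proof. The generating-function version is just a repackaging of the same count (evaluating $f_A(x)^2$ at $x=1$ recovers $2\sum_{\overline{n}}R_A(\overline{n})-|A|=|A|^2$), so both routes are essentially the paper's approach and both are sound.
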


\begin{proof}
If $R_{A}(\overline{n})=R_{B}(\overline{n})$ for all $\overline{n}\in \mathbb{Z}_{m}$, then
$$\binom{|A|}{2}+|A|=\sum\limits_{\overline{n}\in \mathbb{Z}_{m}}R_{A}(\overline{n})=\sum\limits_{\overline{n}\in \mathbb{Z}_{m}}R_{B}(\overline{n})=\binom{|B|}{2}+|B|.$$
Thus $|A|=|B|$. Noting that
$$|A|+|B|=|A\cup B|+|A\cap B|=m+2,$$
we have $|A|=|B|=\frac{m}{2}+1$.

This completes the proof of Lemma \ref{lem1}.
\end{proof}

\begin{lemma}\label{lem2}
Let $m$ be a positive even integer. Let $A, B\subseteq \mathbb{Z}_{m}$ with $A\cup B=\mathbb{Z}_{m}$ and $A\cap B=\{\overline{r_{1}}, \overline{r_{2}}\}$. If $R_{A}(\overline{n})=R_{B}(\overline{n})$ for all $\overline{n}\in \mathbb{Z}_{m}$, then
$$\chi_{A}(n-r_{1})+\chi_{A}(n-r_{2})=1+R_{\{\overline{r_{1}}, \overline{r_{2}}\}}(\overline{n}), \text{ if } 2\nmid n$$
and
$$\chi_{A}(n-r_{1})+\chi_{A}(n-r_{2})=2+R_{\{\overline{r_{1}}, \overline{r_{2}}\}}(\overline{n})-\chi_{A}(\frac{n}{2})-\chi_{A}(\frac{n+m}{2}), \text{ if } 2\mid n.$$
\end{lemma}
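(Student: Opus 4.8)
The plan is to rewrite everything in terms of characteristic functions and to compare the \emph{ordered} representation functions
\[
\widehat{R}_{S}(\overline{n}):=\sum_{\overline{k}\in\mathbb{Z}_{m}}\chi_{S}(k)\,\chi_{S}(n-k)\qquad(S\subseteq\mathbb{Z}_{m}).
\]
Since $m$ is even, the map $\overline{x}\mapsto 2\overline{x}$ has image exactly the even classes, so $2\overline{x}=\overline{n}$ has no solution when $2\nmid n$ and precisely the two solutions $\overline{n/2}$, $\overline{(n+m)/2}$ when $2\mid n$. Comparing ordered and unordered counts therefore gives, for every $S\subseteq\mathbb{Z}_{m}$,
\[
\widehat{R}_{S}(\overline{n})=2R_{S}(\overline{n})-D_{S}(\overline{n}),\qquad
D_{S}(\overline{n})=\begin{cases}0,&2\nmid n,\\ \chi_{S}(n/2)+\chi_{S}((n+m)/2),&2\mid n.\end{cases}
\]
In particular the hypothesis $R_{A}=R_{B}$ translates into $\widehat{R}_{A}(\overline{n})-\widehat{R}_{B}(\overline{n})=D_{B}(\overline{n})-D_{A}(\overline{n})$. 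This is the step where the even/odd dichotomy of the lemma originates.

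Next I would use the hypothesis $A\cup B=\mathbb{Z}_{m}$ in the form $\chi_{A}+\chi_{B}=\chi_{A\cup B}+\chi_{A\cap B}=1+\chi_{C}$, where $C:=\{\overline{r_{1}},\overline{r_{2}}\}$; equivalently $\chi_{B}=1+\chi_{C}-\chi_{A}$. Substituting this into $\widehat{R}_{B}(\overline{n})=\sum_{k}\chi_{B}(k)\chi_{B}(n-k)$ and expanding the product into its nine summands, every sum is elementary: $\sum_{k}1=m$; $\sum_{k}\chi_{C}(k)=\sum_{k}\chi_{C}(n-k)=|C|=2$; $\sum_{k}\chi_{A}(k)=\sum_{k}\chi_{A}(n-k)=|A|$; the pure quadratic terms give $\widehat{R}_{C}(\overline{n})$ and $\widehat{R}_{A}(\overline{n})$; and each of the two mixed sums $\sum_{k}\chi_{A}(k)\chi_{C}(n-k)$ and $\sum_{k}\chi_{C}(k)\chi_{A}(n-k)$ equals $\chi_{A}(n-r_{1})+\chi_{A}(n-r_{2})$ (because $\chi_{C}$ forces $k\in\{n-r_{1},n-r_{2}\}$, respectively $k\in\{r_{1},r_{2}\}$, and $\overline{r_{1}}\neq\overline{r_{2}}$). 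Collecting terms and using $|A|=\tfrac{m}{2}+1$ from Lemma \ref{lem1} then yields
\[
\widehat{R}_{A}(\overline{n})-\widehat{R}_{B}(\overline{n})=2\bigl(\chi_{A}(n-r_{1})+\chi_{A}(n-r_{2})\bigr)-2-\widehat{R}_{C}(\overline{n}).
\]

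Finally I would equate the two expressions for $\widehat{R}_{A}(\overline{n})-\widehat{R}_{B}(\overline{n})$ and write $\widehat{R}_{C}(\overline{n})=2R_{\{\overline{r_{1}},\overline{r_{2}}\}}(\overline{n})-D_{C}(\overline{n})$, obtaining
\[
2\bigl(\chi_{A}(n-r_{1})+\chi_{A}(n-r_{2})\bigr)=2+\bigl(D_{B}(\overline{n})-D_{A}(\overline{n})\bigr)+2R_{\{\overline{r_{1}},\overline{r_{2}}\}}(\overline{n})-D_{C}(\overline{n}).
\]
For $2\nmid n$ all three $D$'s vanish and dividing by $2$ gives the first identity. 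For $2\mid n$, applying $\chi_{B}=1+\chi_{C}-\chi_{A}$ at the two classes $\overline{n/2}$ and $\overline{(n+m)/2}$ gives $D_{B}(\overline{n})=2+D_{C}(\overline{n})-D_{A}(\overline{n})$; substituting this and dividing by $2$ gives $\chi_{A}(n-r_{1})+\chi_{A}(n-r_{2})=2+R_{\{\overline{r_{1}},\overline{r_{2}}\}}(\overline{n})-D_{A}(\overline{n})$, which is the second identity since $D_{A}(\overline{n})=\chi_{A}(n/2)+\chi_{A}((n+m)/2)$. No serious obstacle is expected: the content is a single finite expansion, and the only thing to be careful about is the bookkeeping of the ``halving'' correction terms $D_{S}$ (ordered versus unordered counts), which is exactly the place where the parity of $n$ enters.
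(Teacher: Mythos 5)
Your proof is correct and follows essentially the same route as the paper: both arguments substitute $\chi_{B}=1+\chi_{\{\overline{r_{1}},\overline{r_{2}}\}}-\chi_{A}$ into the representation count for $B$, invoke Lemma \ref{lem1} to get $|A|=\frac{m}{2}+1$, and isolate the diagonal solutions $\overline{n/2}$, $\overline{(n+m)/2}$ as the source of the parity dichotomy. Your reformulation via the ordered counts $\widehat{R}_{S}=2R_{S}-D_{S}$ is just a cleaner way of doing the bookkeeping that the paper carries out directly on the unordered sums with explicit index ranges; all the individual steps (the nine-term expansion, the evaluation of the mixed sums, and the identity $D_{B}=2+D_{C}-D_{A}$) check out.
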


\begin{proof} For any $\overline{n}\in \mathbb{Z}_{m}$, without loss of generality, we may suppose that $0\leq n\leq m-1$.
Noting that $B=(\mathbb{Z}_{m}\backslash A)\cup \{\overline{r_{1}}, \overline{r_{2}}\}$, we have
\begin{eqnarray*}
R_{B}(\overline{n})&=& R_{\mathbb{Z}_{m}\backslash A}(\overline{n})+R_{\mathbb{Z}_{m}\backslash A, \{\overline{r_{1}}, \overline{r_{2}}\}}(\overline{n})+
 R_{\{\overline{r_{1}}, \overline{r_{2}}\}}(\overline{n})\\
&=& |\{(a, a'): \overline{a}, \overline{a'}\in\mathbb{Z}_{m}\backslash A, 0\leq a\leq a'\leq m-1, a+a'=n \text{ or } a+a'=n+m\}|\\
&& \hspace{2mm}+\sum\limits_{i=1}^{2}(1-\chi_{A}(n-r_{i}))+R_{\{\overline{r_{1}}, \overline{r_{2}}\}}(\overline{n})\\
&=& \sum\limits_{0\leq i\leq \frac{n}{2}}(1-\chi_{A}(i))(1-\chi_{A}(n-i))+\sum\limits_{n+1\leq i\leq \frac{n+m}{2}}(1-\chi_{A}(i))(1-\chi_{A}(n-i))\\
&&\hspace{2mm} +\sum\limits_{i=1}^{2}(1-\chi_{A}(n-r_{i}))+R_{\{\overline{r_{1}}, \overline{r_{2}}\}}(\overline{n})\\
&=& \sum\limits_{0\leq i\leq \frac{n}{2}}1-\sum\limits_{0\leq i\leq n}\chi_{A}(i)-\chi_{A}(\frac{n}{2})+\sum\limits_{0\leq i\leq \frac{n}{2}}\chi_{A}(i)\chi_{A}(n-i)+\sum\limits_{n+1\leq i\leq \frac{n+m}{2}}1\\
&&\hspace{2mm} -\sum\limits_{n+1\leq i\leq m-1}\chi_{A}(i)-\chi_{A}(\frac{n+m}{2})+\sum\limits_{n+1\leq i\leq \frac{n+m}{2}}\chi_{A}(i)\chi_{A}(n-i)\\
&&\hspace{2mm} +\sum\limits_{i=1}^{2}(1-\chi_{A}(n-r_{i}))+R_{\{\overline{r_{1}}, \overline{r_{2}}\}}(\overline{n})
\end{eqnarray*}
\begin{eqnarray*}
&=& \sum\limits_{0\leq i\leq \frac{n}{2}}1+\sum\limits_{n+1\leq i\leq \frac{n+m}{2}}1-|A|-\chi_{A}(\frac{n}{2})-\chi_{A}(\frac{n+m}{2})+R_{A}(\overline{n})
\\
&&\hspace{2mm}+\sum\limits_{i=1}^{2}(1-\chi_{A}(n-r_{i}))+R_{\{\overline{r_{1}}, \overline{r_{2}}\}}(\overline{n}).\\
\end{eqnarray*}
Since $R_{A}(\overline{n})=R_{B}(\overline{n})$ for all $\overline{n}\in \mathbb{Z}_{m}$, we have
\begin{eqnarray*}0&=& \sum\limits_{0\leq i\leq \frac{n}{2}}1+\sum\limits_{n+1\leq i\leq \frac{n+m}{2}}1-|A|-\chi_{A}(\frac{n}{2})-\chi_{A}(\frac{n+m}{2})\\
&&\hspace{2mm}+\sum\limits_{i=1}^{2}(1-\chi_{A}(n-r_{i}))+R_{\{\overline{r_{1}}, \overline{r_{2}}\}}(\overline{n}).
\end{eqnarray*}
If $2\nmid n$, then by Lemma \ref{lem1} we have
$$\chi_{A}(n-r_{1})+\chi_{A}(n-r_{2})=1+R_{\{\overline{r_{1}}, \overline{r_{2}}\}}(\overline{n}).$$
If $2\mid n$, then  by Lemma \ref{lem1} we have
$$\chi_{A}(n-r_{1})+\chi_{A}(n-r_{2})=2+R_{\{\overline{r_{1}}, \overline{r_{2}}\}}(\overline{n})-\chi_{A}(\frac{n}{2})-\chi_{A}(\frac{n+m}{2}).$$

This completes the proof of Lemma \ref{lem2}.
\end{proof}

\begin{lemma}\label{lem3}
Let $m$ be a positive even integer. Let $A, B\subseteq \mathbb{Z}_{m}$ with $A\cup B=\mathbb{Z}_{m}$ and $A\cap B=\{\overline{r}, \overline{r+\frac{m}{2}}\}$. Then $R_{A}(\overline{n})=R_{B}(\overline{n})$ for all $\overline{n}\in \mathbb{Z}_{m}$ if and only if $B=A+\overline{\frac{m}{2}}$.
\end{lemma}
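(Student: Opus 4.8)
The ``if'' direction is the routine one, so I would dispatch it first. Suppose $B = A + \overline{\frac{m}{2}}$. Then for every $\overline{n}$ the substitution $(\overline{b}, \overline{b'}) = (\overline{a} + \overline{\frac{m}{2}}, \overline{a'} + \overline{\frac{m}{2}})$ sets up a bijection between representations $\overline{n} = \overline{a} + \overline{a'}$ with $\overline{a}, \overline{a'} \in A$ and representations $\overline{n} + \overline{m} = \overline{n} = \overline{b} + \overline{b'}$ with $\overline{b}, \overline{b'} \in B$; I only need to check this bijection respects (or can be made to respect) the unordered-pair bookkeeping built into the definition of $R_A$, which it does since ``$\overline{a} \le \overline{a'}$'' only matters up to swapping the two coordinates. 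Hence $R_A(\overline{n}) = R_B(\overline{n})$ for all $\overline{n}$. One also needs to note that $B = A + \overline{\frac{m}{2}}$ is consistent with the hypotheses $A \cup B = \mathbb{Z}_m$ and $A \cap B = \{\overline r, \overline{r + \frac{m}{2}}\}$ — indeed under these hypotheses $A$ must be a union of the fixed pair with a set of coset representatives of $\langle \overline{\frac m2}\rangle$, but this compatibility is really part of the ``only if'' analysis.

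For the ``only if'' direction I would invoke Lemma \ref{lem2} with $r_1 = r$ and $r_2 = r + \frac{m}{2}$. The point of choosing the intersection to be an $\overline{\frac{m}{2}}$-coset is that $R_{\{\overline{r_1}, \overline{r_2}\}}(\overline{n})$ becomes completely explicit: $\overline{r_1} + \overline{r_2} = \overline{2r + \frac m2}$, $\overline{r_1} + \overline{r_1} = \overline{2r}$, $\overline{r_2} + \overline{r_2} = \overline{2r + m} = \overline{2r}$, so $R_{\{\overline{r_1}, \overline{r_2}\}}(\overline n)$ equals $2$ when $\overline n = \overline{2r}$, equals $1$ when $\overline n = \overline{2r + \frac m2}$, and $0$ otherwise — in particular it is supported on even residues in the natural indexing (after translating by $2r$). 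Feeding this into the two identities of Lemma \ref{lem2} gives, for odd $n$, the clean relation $\chi_A(n - r_1) + \chi_A(n - r_2) = 1$, i.e. exactly one of $\overline{n - r}$, $\overline{n - r - \frac m2}$ lies in $A$. Since $n$ ranges over all odd residues and $(n-r) - (n - r - \frac m2) = \frac m2$, this says precisely that on each $\langle \overline{\frac m2}\rangle$-coset consisting of the appropriate parity class, $A$ contains exactly one of the two elements; iterating/combining with the even case (where the $\chi_A(\frac n2) + \chi_A(\frac{n+m}{2})$ term must be handled, but it is again a ``pick exactly one from a $\frac m2$-coset'' statement) pins down that $A$ meets all but one or two of these cosets in a single point.

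The heart of the argument is then to upgrade ``$A$ picks exactly one element from each relevant $\overline{\frac m2}$-coset'' to ``$B$ is obtained from $A$ by the single translation $\overline{\frac m2}$''. Here is where I would work: for each coset $\{\overline x, \overline{x + \frac m2}\}$ other than the special coset $\{\overline r, \overline{r + \frac m2}\}$, exactly one element is in $A \setminus B$ and the other is in $B \setminus A$ (because $A \cup B = \mathbb Z_m$ and the coset is disjoint from $A \cap B$); the content of the odd-$n$ (and even-$n$) relations from Lemma \ref{lem2} is that the assignment ``which element of the coset lies in $A$'' is forced to be consistent across cosets, namely $A$ on a coset determines $A$ on ``neighboring'' cosets via the relation indexed by $n$. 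I expect the main obstacle to be the bookkeeping that turns these local constraints into the global statement $B = A + \overline{\frac m2}$ — in particular making sure the special coset $\{\overline r, \overline{r+\frac m2}\}$ (which is entirely inside $A \cap B$, hence fixed setwise by $+\overline{\frac m2}$) does not obstruct the translation on the rest, and checking that the parity/indexing subtleties in Lemma \ref{lem2} (the floor in $\frac n2$, the split between $n$ even and odd) don't leave a residual ambiguity. Once the coset-by-coset matching is shown to be given by a single shift, that shift must be $\overline{\frac m2}$ (it is the unique nonzero element of $\langle \overline{\frac m2}\rangle$), and we conclude $B = A + \overline{\frac m2}$.
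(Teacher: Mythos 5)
Your overall strategy coincides with the paper's: both reduce the problem to showing that $\chi_{A}(k)+\chi_{A}(k+\frac{m}{2})=1$ for every $\overline{k}\notin\{\overline{r},\overline{r+\frac{m}{2}}\}$, and both feed the explicit values of $R_{\{\overline{r},\overline{r+\frac{m}{2}}\}}$ into Lemma \ref{lem2}. The odd-$n$ instance of Lemma \ref{lem2} does settle the cosets with $k\equiv r+1\pmod 2$ exactly as you describe. But there is a genuine gap at the step you compress into ``iterating/combining with the even case \dots\ pins down that $A$ meets all but one or two of these cosets in a single point.'' First, ``all but one or two'' is not enough: the conclusion $B=A+\overline{\frac{m}{2}}$ requires the coset condition for \emph{every} non-special coset. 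Second, and more importantly, the even-$n$ identity never directly produces a coset sum equal to $1$; it only relates the coset sum at $\overline{n-r}$ to the coset sum at $\overline{n/2}$, via $\chi_{A}(n-r)+\chi_{A}(n-r-\frac{m}{2})=2-\chi_{A}(\frac{n}{2})-\chi_{A}(\frac{n}{2}+\frac{m}{2})$ when the $R$-term vanishes. When $4\mid m$, the residues $k\equiv r\pmod 2$ therefore require a genuine $2$-adic descent: one must exhibit each such $k$ as the endpoint of a finite chain $a_{i+1}=2a_{i}-r$ whose bottom element has the opposite parity (the paper does this with the explicit parametrization $a_{i,k_{1}}=2^{i+1}k_{1}+2^{i}(r+1)-(2^{i}-1)r$, needed because ``halving'' is not well defined in $\mathbb{Z}_{m}$ and the iteration has to be organized at the integer level), check that the correction terms $R_{\{\overline{r},\overline{r+\frac{m}{2}}\}}(\overline{2a_{i}})$ vanish along the entire chain (this is where $\overline{k}\neq\overline{r},\overline{r+\frac{m}{2}}$ enters), and verify that the alternating signs in the resulting telescoping sum collapse to $1$. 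None of this appears in your plan, and it is the actual content of the lemma.

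Two smaller points. Your claim that $R_{\{\overline{r},\overline{r+\frac{m}{2}}\}}$ is supported on even residues fails when $m\equiv 2\pmod 4$ (then $\overline{2r+\frac{m}{2}}$ is odd); the paper handles that case separately and more simply, pairing $n=2k$ with the odd value $n=2k+\frac{m}{2}$, with no descent needed. And you have the difficulty inverted at the end: once $\chi_{A}(k)+\chi_{A}(k+\frac{m}{2})=1$ is known for every non-special coset, the passage to $B=A+\overline{\frac{m}{2}}$ is immediate from $B=(\mathbb{Z}_{m}\setminus A)\cup\{\overline{r},\overline{r+\frac{m}{2}}\}$ together with the fact that the special coset is setwise invariant under translation by $\overline{\frac{m}{2}}$; that is routine bookkeeping, not the heart of the argument.
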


\begin{proof} If $B=A+\overline{\frac{m}{2}}$, then it is clear that $R_{A}(\overline{n})=R_{B}(\overline{n})$ for all $\overline{n}\in \mathbb{Z}_{m}$.
Now we suppose that $R_{A}(\overline{n})=R_{B}(\overline{n})$ for all $\overline{n}\in \mathbb{Z}_{m}$. It is sufficient to prove that for all integers $k$ with $\overline{k}\neq \overline{r}$ and $\overline{k}\neq \overline{r+\frac{m}{2}}$, we have
\begin{equation}\label{2.1}
\chi_{A}(k)+\chi_{A}(k+\frac{m}{2})=1.
\end{equation}
We will discuss the following two cases according to $m$.

{\bf Case 1.} $m\equiv 2\pmod 4$. Then $\frac{m}{2}$ is odd and $R_{\{\overline{r}, \overline{r+\frac{m}{2}}\}}(\overline{2k})=0$. By Lemma \ref{lem2}, we have
$$\chi_{A}(2k-r)+\chi_{A}(2k-(r+\frac{m}{2}))=2-\chi_{A}(k)-\chi_{A}(k+\frac{m}{2})$$
and
$$\chi_{A}((2k+\frac{m}{2})-r)+\chi_{A}((2k+\frac{m}{2})-(r+\frac{m}{2}))=1.$$
Thus
$$\chi_{A}(k)+\chi_{A}(k+\frac{m}{2})=1. $$

{\bf Case 2.} $m\equiv 0 \pmod 4$. Let $n, t$ be any integers with $n-t=r$. By Lemma \ref{lem2}, we have
\begin{eqnarray}\label{2.2}
 &&\chi_{A}(n-r)+\chi_{A}(n-(r+\frac{m}{2}))=1, \text { if }  2\nmid n  \nonumber \\
\Longleftrightarrow && \chi_{A}(t)+\chi_{A}(t+\frac{m}{2})=1, \text { if }  t\equiv r+1\pmod 2.
\end{eqnarray}

If $k\equiv r+1\pmod 2$, then we choose $t=k$ in (\ref{2.2}) and (\ref{2.1}) is proved.

Now we suppose that $k\equiv r\pmod 2$. For any integer $k_{1}$, let
$$ a_{i, k_{1}}=2^{i+1}k_{1}+2^{i}(r+1)-(2^{i}-1)r, \; i=1, 2, \ldots.$$
It is clear that \begin{equation}\label{2.3}
a_{i+1, k_{1}}=2a_{i, k_{1}}-r, \; i=1, 2, \ldots
\end{equation}
and
$$\{a_{i, k_{1}}: k_{1}\in \mathbb{Z}, i\in\mathbb{Z}^{+}\}\subseteq 2\mathbb{Z}+r.$$
On the other hand, for any $b\in 2\mathbb{Z}+r$, there exist integers $q, c$ with $q\geq 1$ and $2\nmid c$ such that $b=2^{q}c+r$. Thus
$$b-r+2^{q}r-2^{q}(r+1)=2^{q}c+2^{q}r-2^{q}(r+1)=2^{q+1}\cdot \frac{c-1}{2}.$$
It follows that
$$b=2^{q+1}\cdot \frac{c-1}{2}+2^{q}(r+1)-(2^{q}-1)r=a_{q, \frac{c-1}{2}}\in \{a_{i, k_{1}}: k_{1}\in \mathbb{Z}, i\in\mathbb{Z}^{+}\}.$$
Hence
\begin{equation}\label{2.4}
\{a_{i, k_{1}}: k_{1}\in \mathbb{Z}, i\in\mathbb{Z}^{+}\}=2\mathbb{Z}+r.
\end{equation}
Therefore there exist integers $j$ and $l$ with $j\geq 1$ such that
$$k=a_{j, l}=2^{j+1}l+2^{j}(r+1)-(2^{j}-1)r.$$
By Lemma \ref{lem2}, we have
\begin{eqnarray*}
&&\chi_{A}(4l+2(r+1)-r)+\chi_{A}(4l+2(r+1)-(r+\frac{m}{2}))\\
=&&2+R_{\{\overline{r}, \overline{r+\frac{m}{2}}\}}(\overline{4l+2(r+1)})-\chi_{A}(2l+(r+1))-\chi_{A}(2l+(r+1)+\frac{m}{2}).
\end{eqnarray*}
It means that
\begin{eqnarray}\label{2.5}
&&\chi_{A}(a_{1, l})+\chi_{A}(a_{1, l}+\frac{m}{2}) \nonumber \\
=&&2+R_{\{\overline{r}, \overline{r+\frac{m}{2}}\}}(\overline{4l+2(r+1)})-\chi_{A}(2l+(r+1))-\chi_{A}(2l+(r+1)+\frac{m}{2}).
\end{eqnarray}
By choosing $t=2l+(r+1)$ in (\ref{2.2}), we have
$$\chi_{A}(2l+(r+1))+\chi_{A}(2l+(r+1)+\frac{m}{2})=1.$$
Thus we can write (\ref{2.5}) as
\begin{equation}\label{2.6}
\chi_{A}(a_{1, l})+\chi_{A}(a_{1, l}+\frac{m}{2})=1+R_{\{\overline{r}, \overline{r+\frac{m}{2}}\}}(\overline{a_{1, l}+r}).
\end{equation}
By Lemma \ref{lem2}, (\ref{2.3}) and (\ref{2.6}), we have
\begin{eqnarray*}
&&\chi_{A}(k)+\chi_{A}(k+\frac{m}{2}) \\
=&&\chi_{A}(a_{j, l})+\chi_{A}(a_{j, l}+\frac{m}{2})\\
=&&\chi_{A}(2a_{j-1, l}-r)+\chi_{A}(2a_{j-1, l}-(r+\frac{m}{2}))\\
=&&2+R_{\{\overline{r}, \overline{r+\frac{m}{2}}\}}(\overline{2a_{j-1, l}})-\chi_{A}(a_{j-1, l})-\chi_{A}(a_{j-1, l}+\frac{m}{2})\\
=&&\cdots\\
=&&\sum\limits_{i=1}^{j-1}(-1)^{j-1-i}2+\sum\limits_{i=1}^{j-1}(-1)^{j-1-i}R_{\{\overline{r}, \overline{r+\frac{m}{2}}\}}(\overline{2a_{i, l}})+(-1)^{j-1}(\chi_{A}(a_{1, l})+\chi_{A}(a_{1, l}+\frac{m}{2}))  \\
=&&\sum\limits_{i=1}^{j-1}(-1)^{j-1-i}2+\sum\limits_{i=1}^{j-1}(-1)^{j-1-i}R_{\{\overline{r}, \overline{r+\frac{m}{2}}\}}(\overline{2a_{i, l}})+(-1)^{j-1}(1+R_{\{\overline{r}, \overline{r+\frac{m}{2}}\}}(\overline{a_{1, l}+r})).
\end{eqnarray*}
If $R_{\{\overline{r}, \overline{r+\frac{m}{2}}\}}(\overline{2a_{i, l}})\geq 1$ for some integer $1\leq i\leq j-1$, then
$\overline{2a_{i, l}}=\overline{2r}$ or $\overline{2a_{i, l}}=\overline{2r+\frac{m}{2}}$. Thus $\overline{k}=\overline{a_{j, l}}=\overline{r}$ or $\overline{k}=\overline{a_{j, l}}=\overline{r+\frac{m}{2}}$, a contradiction. Hence $R_{\{\overline{r}, \overline{r+\frac{m}{2}}\}}(\overline{2a_{i, l}})=0$ for any integer $1\leq i\leq j-1$. Therefore
$$\sum\limits_{i=1}^{j-1}(-1)^{j-1-i}R_{\{\overline{r}, \overline{r+\frac{m}{2}}\}}(\overline{2a_{i, l}})=0.$$
If $R_{\{\overline{r}, \overline{r+\frac{m}{2}}\}}(\overline{a_{1, l}+r})\geq 1$, then $\overline{a_{1, l}}=\overline{r}$ or $\overline{a_{1, l}}=\overline{r+\frac{m}{2}}$. Thus $\overline{k}=\overline{a_{j, l}}=\overline{r}$ or $\overline{k}=\overline{a_{j, l}}=\overline{r+\frac{m}{2}}$, a contradiction. Hence $R_{\{\overline{r}, \overline{r+\frac{m}{2}}\}}(\overline{a_{1, l}+r})=0$. It follows that
$$ \chi_{A}(k)+\chi_{A}(k+\frac{m}{2})=\sum\limits_{i=1}^{j-1}(-1)^{j-1-i}2+(-1)^{j-1}=1.$$

This completes the proof of Lemma \ref{lem3}.
\end{proof}

\section{Proof of Theorem \ref{thm1}}

It is clear that $R_{A}(\overline{n})=R_{B}(\overline{n})$ for all $\overline{n}\in \mathbb{Z}_{m}$ if $B=A+\overline{\frac{m}{2}}$.
Now we suppose that $R_{A}(\overline{n})=R_{B}(\overline{n})$ for all $\overline{n}\in \mathbb{Z}_{m}$. Let $A\cap B=\{\overline{r_{1}}, \overline{r_{2}}\}$ with $\overline{r_{1}}\neq \overline{r_{2}}$. If $m=2$, then $B=A+\overline{\frac{m}{2}}$. Now we assume that $m=2^{\alpha}$ with $\alpha\geq 3$. By Lemma \ref{lem3}, it suffices to prove that $\overline{r_{2}}=\overline{r_{1}}+\overline{\frac{m}{2}}$. We suppose that $\overline{r_{2}}\neq \overline{r_{1}}+\overline{\frac{m}{2}}$ and will show that this leads to a contradiction.

{\bf Case 1.} $2\mid (r_{2}-r_{1})$. For any integer $t$ with $t\equiv r_{1}+1\pmod 2$ and any integer $q$ with $n=t+r_{1}+q(r_{2}-r_{1})$ in Lemma \ref{lem2}, we have
$$\chi_{A}(t+q(r_{2}-r_{1}))+\chi_{A}(t+(q-1)(r_{2}-r_{1}))=1+R_{\{\overline{r_{1}}, \overline{r_{2}}\}}(\overline{t+r_{1}+q(r_{2}-r_{1})}).$$
If $R_{\{\overline{r_{1}}, \overline{r_{2}}\}}(\overline{t+r_{1}+q(r_{2}-r_{1})})\geq 1$, then $\overline{t+r_{1}+q(r_{2}-r_{1})}\in\{\overline{2r_{1}}, \overline{2r_{2}}, \overline{r_{1}+r_{2}}\}$. It implies that $t\equiv r_{1}\pmod 2$, which is impossible. Thus $R_{\{\overline{r_{1}}, \overline{r_{2}}\}}(\overline{t+r_{1}+q(r_{2}-r_{1})})=0$ and
\begin{equation}\label{3.1}
\chi_{A}(t+q(r_{2}-r_{1}))+\chi_{A}(t+(q-1)(r_{2}-r_{1}))=1.
\end{equation}
It follows that for any integer $t$ with $t\equiv r_{1}+1\pmod 2$ and any integer $k$
\begin{eqnarray}\label{3.2}
\chi_{A}(t)+\chi_{A}(t+k(r_{2}-r_{1}))=1,\; && \text{ if } 2\nmid k; \nonumber \\
\chi_{A}(t)=\chi_{A}(t+k(r_{2}-r_{1})),\; && \text{ if } 2\mid k.
\end{eqnarray}
Noting that $\overline{r_{2}}\neq \overline{r_{1}}$ and $\overline{r_{2}}\neq \overline{r_{1}}+\overline{\frac{m}{2}}$, we have $(m, r_{2}-r_{1})\mid {\frac{m}{2}}$. Then there exists an even integer $h$ such that
\begin{equation}\label{3.3}
h(r_{2}-r_{1})\equiv \frac{m}{2} \pmod m.
\end{equation}

If $\frac{r_{1}+r_{2}}{2}\equiv r_{1}+1\pmod 2$, then by choosing $t=\frac{r_{1}+r_{2}}{2}$ in (\ref{3.2}), we have
$$\chi_{A}(\frac{r_{1}+r_{2}}{2})=\chi_{A}(\frac{r_{1}+r_{2}}{2}+h(r_{2}-r_{1}))=\chi_{A}(\frac{r_{1}+r_{2}}{2}+\frac{m}{2}).$$
Let $n=r_{1}+r_{2}$ in Lemma \ref{lem2}, we have $R_{\{\overline{r_{1}}, \overline{r_{2}}\}}(\overline{r_{1}+r_{2}})=1$ and
$$2=\chi_{A}(r_{1})+\chi_{A}(r_{2})=3-\chi_{A}(\frac{r_{1}+r_{2}}{2})-\chi_{A}(\frac{r_{1}+r_{2}}{2}+\frac{m}{2}),$$
which is clearly false.

If $\frac{r_{1}+r_{2}}{2}\not\equiv r_{1}+1\pmod 2$, then $\frac{r_{1}+r_{2}}{2}\equiv r_{1}\pmod 2$. Thus $r_{2}\equiv r_{1}\pmod 4$. It follows that for any integers $t\equiv r_{1}+1\pmod 2$ and $j\in\{0, 1, 2\}$, we can obtain
$$2t+j(r_{2}-r_{1})\not\equiv 2r_{1}\; ({\rm\text{mod }} 4), 2t+j(r_{2}-r_{1})\not\equiv 2r_{2}\; ({\rm\text{mod }} 4), 2t+j(r_{2}-r_{1})\not\equiv r_{1}+r_{2}\; ({\rm\text{mod }} 4).$$
Then $R_{\{\overline{r_{1}}, \overline{r_{2}}\}}(\overline{2t+j(r_{2}-r_{1})})=0$ for $j\in\{0, 1, 2\}$. Let $n=2t+j(r_{2}-r_{1})$ for $j\in\{0, 1, 2\}$ in Lemma \ref{lem2}, we have
\begin{eqnarray*}
&&\chi_{A}(2t-r_{1})+\chi_{A}(2t-r_{2})=2-\chi_{A}(t)-\chi_{A}(t+\frac{m}{2}),\\
&&\chi_{A}(2t+r_{2}-2r_{1})+\chi_{A}(2t-r_{1})=2-\chi_{A}(t+\frac{r_{2}-r_{1}}{2})-\chi_{A}(t+\frac{r_{2}-r_{1}}{2}+\frac{m}{2}),\\
&&\chi_{A}(2t+2r_{2}-3r_{1})+\chi_{A}(2t+r_{2}-2r_{1})=2-\chi_{A}(t+r_{2}-r_{1})-\chi_{A}(t+r_{2}-r_{1}+\frac{m}{2}).
\end{eqnarray*}
By (\ref{3.2}) and (\ref{3.3}), we have
\begin{eqnarray*}
&&\chi_{A}(t)=\chi_{A}(t+h(r_{2}-r_{1}))=\chi_{A}(t+\frac{m}{2}),\\
&&\chi_{A}(t+\frac{r_{2}-r_{1}}{2})=\chi_{A}(t+\frac{r_{2}-r_{1}}{2}+h(r_{2}-r_{1}))=\chi_{A}(t+\frac{r_{2}-r_{1}}{2}+\frac{m}{2}),\\
&&\chi_{A}(t+r_{2}-r_{1})=\chi_{A}(t+r_{2}-r_{1}+h(r_{2}-r_{1}))=\chi_{A}(t+r_{2}-r_{1}+\frac{m}{2}).
\end{eqnarray*}
Then
\begin{eqnarray*}
&&\chi_{A}(2t-r_{1})=\chi_{A}(2t-r_{2})=1-\chi_{A}(t),\\
&&\chi_{A}(2t+r_{2}-2r_{1})=\chi_{A}(2t-r_{1})=1-\chi_{A}(t+\frac{r_{2}-r_{1}}{2}),\\
&&\chi_{A}(2t+2r_{2}-3r_{1})=\chi_{A}(2t+r_{2}-2r_{1})=1-\chi_{A}(t+r_{2}-r_{1}).
\end{eqnarray*}
Thus
$$\chi_{A}(t)=1-\chi_{A}(2t-r_{1})=1-\chi_{A}(2t+r_{2}-2r_{1})=1-\chi_{A}(2t+2r_{2}-3r_{1})=\chi_{A}(t+r_{2}-r_{1}).$$
However, by (\ref{3.2}), we have
$$\chi_{A}(t)+\chi_{A}(t+r_{2}-r_{1})=1,$$
a contradiction.

{\bf Case 2.} $2\nmid (r_{2}-r_{1})$. Without loss of generality, we suppose that $2\mid r_{1}$ and $2\nmid r_{2}$. For any nonnegative integer $k$,
let $n=r_{1}+r_{2}+2k(r_{2}-r_{1}), 2r_{1}+2k(r_{2}-r_{1})$ in Lemma \ref{lem2} respectively, we have
\begin{equation}\label{3.4}
\chi_{A}(r_{1}+(2k+1)(r_{2}-r_{1}))+\chi_{A}(r_{1}+2k(r_{2}-r_{1}))=1+R_{\{\overline{r_{1}}, \overline{r_{2}}\}}(\overline{r_{1}+r_{2}+2k(r_{2}-r_{1})})
\end{equation}
and
\begin{eqnarray*}
&&\chi_{A}(r_{1}+2k(r_{2}-r_{1}))+\chi_{A}(r_{1}+(2k-1)(r_{2}-r_{1}))\\
=&&2+R_{\{\overline{r_{1}}, \overline{r_{2}}\}}(\overline{2r_{1}+2k(r_{2}-r_{1})})-\chi_{A}(r_{1}+k(r_{2}-r_{1}))-\chi_{A}(r_{1}+k(r_{2}-r_{1})+\frac{m}{2}).
\end{eqnarray*}
Noting that $\frac{m}{2}(r_{2}-r_{1})\equiv \frac{m}{2}\pmod m$, we have
\begin{eqnarray}\label{3.5}
&&\chi_{A}(r_{1}+2k(r_{2}-r_{1}))+\chi_{A}(r_{1}+(2k-1)(r_{2}-r_{1}))\nonumber\\
=&&2+R_{\{\overline{r_{1}}, \overline{r_{2}}\}}(\overline{2r_{1}+2k(r_{2}-r_{1})})-\chi_{A}(r_{1}+k(r_{2}-r_{1}))\nonumber\\
&&-\chi_{A}(r_{1}+(k+\frac{m}{2})(r_{2}-r_{1})).
\end{eqnarray}
By choosing $k=1$ in (\ref{3.5}), we have
\begin{equation}\label{3.6}
\chi_{A}(r_{1}+2(r_{2}-r_{1}))+\chi_{A}(r_{1}+(1+\frac{m}{2})(r_{2}-r_{1}))=1.
\end{equation}
For $l\in\{1, 2, \ldots, \frac{m}{4}-1\}$, we have
\begin{eqnarray*}
&&\chi_{A}(r_{1}+(4l+2)(r_{2}-r_{1}))+2l+1\\
&=&\chi_{A}(r_{1}+(4l+2)(r_{2}-r_{1}))+\sum\limits_{k=1}^{2l}(\chi_{A}(r_{1}+(2k+1)(r_{2}-r_{1}))+\chi_{A}(r_{1}+2k(r_{2}-r_{1}))+\chi_{A}(r_{2})\\
&=&\sum\limits_{k=1}^{2l+1}(\chi_{A}(r_{1}+2k(r_{2}-r_{1}))+\chi_{A}(r_{1}+(2k-1)(r_{2}-r_{1})))\\
&=& 4l+3-\sum\limits_{k=1}^{2l+1}\chi_{A}(r_{1}+k(r_{2}-r_{1}))-\sum\limits_{k=1}^{2l+1}\chi_{A}(r_{1}+(k+\frac{m}{2})(r_{2}-r_{1})))\\
&=& 4l+3-\chi_{A}(r_{2})-\sum\limits_{k=1}^{l}(\chi_{A}(r_{1}+(2k+1)(r_{2}-r_{1}))+\chi_{A}(r_{1}+2k(r_{2}-r_{1})))\\
&&\hspace{2mm}-\sum\limits_{k=1}^{l}(\chi_{A}(r_{1}+(2k+1+\frac{m}{2})(r_{2}-r_{1}))+
\chi_{A}(r_{1}+(2k+\frac{m}{2})(r_{2}-r_{1})))\\
&&\hspace{2mm} -\chi_{A}(r_{1}+(1+\frac{m}{2})(r_{2}-r_{1}))\\
&=&2l+2-\chi_{A}(r_{1}+(1+\frac{m}{2})(r_{2}-r_{1})).
\end{eqnarray*}
Then
\begin{equation}\label{3.7}
\chi_{A}(r_{1}+(1+\frac{m}{2})(r_{2}-r_{1}))+\chi_{A}(r_{1}+(4l+2)(r_{2}-r_{1}))=1
\end{equation}
By (\ref{3.6}) and (\ref{3.7}), for $l\in \{0, 1, 2, \ldots, \frac{m}{4}-1\}$, we have
\begin{equation}\label{3.8}
\chi_{A}(r_{1}+(1+\frac{m}{2})(r_{2}-r_{1}))+\chi_{A}(r_{1}+(4l+2)(r_{2}-r_{1}))=1.
\end{equation}

By choosing $k=\frac{m}{4}$ in (\ref{3.4}) and $k=0$ in (\ref{3.5}), we have
$$ \chi_{A}(r_{1}+(1+\frac{m}{2})(r_{2}-r_{1}))+\chi_{A}(r_{1}+\frac{m}{2}(r_{2}-r_{1}))=1,$$
and
$$\chi_{A}(r_{1}+(-1)(r_{2}-r_{1}))+\chi_{A}(r_{1}+\frac{m}{2}(r_{2}-r_{1}))=1.$$
Thus
\begin{equation}\label{3.9}
\chi_{A}(r_{1}+(1+\frac{m}{2})(r_{2}-r_{1}))=\chi_{A}(r_{1}+(-1)(r_{2}-r_{1})).
\end{equation}
For $l\in\{1, 2, \ldots, \frac{m}{4}-1\}$, we have
\begin{eqnarray*}
&&\chi_{A}(r_{1}+4l(r_{2}-r_{1}))+2l\\
&=&\chi_{A}(r_{1}+4l(r_{2}-r_{1}))+\sum\limits_{k=1}^{2l-1}(\chi_{A}(r_{1}+(2k+1)(r_{2}-r_{1}))+\chi_{A}(r_{1}+2k(r_{2}-r_{1})))+\chi_{A}(r_{2})\\
&=&\sum\limits_{k=1}^{2l}(\chi_{A}(r_{1}+2k(r_{2}-r_{1}))+\chi_{A}(r_{1}+(2k-1)(r_{2}-r_{1})))\\
&=& 4l+1-\sum\limits_{k=1}^{2l}\chi_{A}(r_{1}+k(r_{2}-r_{1}))-\sum\limits_{k=1}^{2l}\chi_{A}(r_{1}+(k+\frac{m}{2})(r_{2}-r_{1})))\\
&=& 4l+1-\chi_{A}(r_{2})-\sum\limits_{k=1}^{l-1}(\chi_{A}(r_{1}+(2k+1)(r_{2}-r_{1}))+\chi_{A}(r_{1}+2k(r_{2}-r_{1})))-\chi_{A}(r_{1}+2l(r_{2}-r_{1}))\\
&&\hspace{2mm}-\sum\limits_{k=1}^{l-1}(\chi_{A}(r_{1}+(2k+1+\frac{m}{2})(r_{2}-r_{1}))+
\chi_{A}(r_{1}+(2k+\frac{m}{2})(r_{2}-r_{1})))\\
&&\hspace{2mm} -\chi_{A}(r_{1}+(1+\frac{m}{2})(r_{2}-r_{1}))-\chi_{A}(r_{1}+(2l+\frac{m}{2})(r_{2}-r_{1}))\\
&=&2l+2-\chi_{A}(r_{1}+2l(r_{2}-r_{1}))-\chi_{A}(r_{1}+(1+\frac{m}{2})(r_{2}-r_{1}))-\chi_{A}(r_{1}+(2l+\frac{m}{2})(r_{2}-r_{1})).
\end{eqnarray*}
Then
$$ \chi_{A}(r_{1}+4l(r_{2}-r_{1}))+\chi_{A}(r_{1}+2l(r_{2}-r_{1}))+\chi_{A}(r_{1}+(1+\frac{m}{2})(r_{2}-r_{1}))+\chi_{A}(r_{1}+(2l+\frac{m}{2})(r_{2}-r_{1}))=2.
$$
By choosing $k=2l$ in (\ref{3.5}), we have
$$ \chi_{A}(r_{1}+4l(r_{2}-r_{1}))+\chi_{A}(r_{1}+(4l-1)(r_{2}-r_{1}))+\chi_{A}(r_{1}+2l(r_{2}-r_{1}))+\chi_{A}(r_{1}+(2l+\frac{m}{2})(r_{2}-r_{1}))=2.$$
Thus
\begin{equation}\label{3.10}
\chi_{A}(r_{1}+(1+\frac{m}{2})(r_{2}-r_{1}))=\chi_{A}(r_{1}+(4l-1)(r_{2}-r_{1})).
\end{equation}
By (\ref{3.9}) and (\ref{3.10}), for $l\in \{0, 1, 2, \ldots, \frac{m}{4}-1\}$, we have
\begin{equation}\label{3.11}
\chi_{A}(r_{1}+(1+\frac{m}{2})(r_{2}-r_{1}))=\chi_{A}(r_{1}+(4l-1)(r_{2}-r_{1})).
\end{equation}

By choosing $k=4l+2$ for $l\in \{0, 1, 2, \ldots, \frac{m}{8}-1\}$ in (\ref{3.5}), we have
\begin{eqnarray}\label{3.12}
&&\chi_{A}(r_{1}+(8l+4)(r_{2}-r_{1}))+\chi_{A}(r_{1}+(8l+3)(r_{2}-r_{1}))\nonumber\\
&=&2-\chi_{A}(r_{1}+(4l+2)(r_{2}-r_{1}))-\chi_{A}(r_{1}+(4l+2+\frac{m}{2})(r_{2}-r_{1})).
\end{eqnarray}
By (\ref{3.8}), (\ref{3.11}) and (\ref{3.12}), we have
\begin{equation}\label{3.13}
\chi_{A}(r_{1}+(8l+4)(r_{2}-r_{1}))=\chi_{A}(r_{1}+(1+\frac{m}{2})(r_{2}-r_{1})).
\end{equation}

If $\alpha=3$, then by choosing $l=0$ in (\ref{3.13}) and $k=2$ in (\ref{3.4}), we have
$$\chi_{A}(r_{1}+4(r_{2}-r_{1}))=\chi_{A}(r_{1}+5(r_{2}-r_{1}))$$
and
$$\chi_{A}(r_{1}+5(r_{2}-r_{1}))+\chi_{A}(r_{1}+4(r_{2}-r_{1}))=1,$$
which is impossible. If $\alpha\geq 4$, then by choosing $k=4$ in (\ref{3.5}), we have
$$ \chi_{A}(r_{1}+8(r_{2}-r_{1}))+\chi_{A}(r_{1}+7(r_{2}-r_{1}))=2-\chi_{A}(r_{1}+4(r_{2}-r_{1}))-\chi_{A}(r_{1}+(4+\frac{m}{2})(r_{2}-r_{1})).$$
By (\ref{3.11}) and (\ref{3.13}), we have
$$ \chi_{A}(r_{1}+8(r_{2}-r_{1}))=2-3\chi_{A}(r_{1}+(1+\frac{m}{2})(r_{2}-r_{1})),$$
which is false.

This completes the proof of Theorem \ref{thm1}.


\begin{thebibliography}{AAAA}

\bibitem{CB} Y.G. Chen and B. Wang, {\it On additive properties of two special sequences}, Acta Arith. 110 (2003) 299-303.

\bibitem{D} G. Dombi, {\it Additive properties of certain sets}, acta Arith. 103 (2002) 137-146.

\bibitem{K} S.Z. Kiss, E. Rozgonyi and C. S\'{a}ndor, {\it Groups, partitions and representation functions}, Publ. Math. Debrecen 85 (2014) 425每433.

\bibitem{KS} S.Z. Kiss and C. S\'{a}ndor, {\it Partitions of the set of nonnegative integers with the same representation functions}, Discrete Math. 340 (2017) 1154每1161.

\bibitem{LT} J.W. Li and M. Tang, {\it Partitions of the set of nonnegative integers with the same representation functions}, Bull. Aust. Math. Soc. 97 (2018) 200每206.

\bibitem{Q1} Z.H. Qu, {\it A remark on weighted representation functions}, Taiwanese J. Math. 18 (2014) 1713每1719.

\bibitem{Q2} Z.H. Qu, {\it Partitions of $\mathbb{Z}_{m}$ with the same weighted representation functions}, Electron. J. Combin. 21 (2014) 2.55.

\bibitem{Q3} Z.H. Qu, {\it A note on representation functions with different weights}, Colloq. Math. 143 (2016) 105每112.

\bibitem{TC} M. Tang and S.Q. Chen, {\it On a problem of partitions of the set of nonnegative integers with the same representation functions}, Discrete Math. 341 (2018) 3075每3078.

\bibitem{TL} M. Tang and J.W. Li, {\it On the structure of some sets which have the same representation functions}, Period. Math. Hungar. 77 (2018) 232-236.

\bibitem{YC} Q.H. Yang and F.J. Chen, {\it Partitions of $\mathbb{Z}_{m}$ with the same representation functions}, Australas. J. Combin. 53 (2012) 257每262.

\bibitem{YT} Q.H. Yang and M. Tang, {\it Representation functions on finite sets with extreme symmetric differences}, J. Number Theory 180 (2017) 73-85.


\end{thebibliography}
\end{document}